\newtheorem{theorem}{Theorem}
\newtheorem{corollary}[theorem]{Corollary}
\newtheorem{question}[theorem]{Question}
\newtheorem{lemma}[theorem]{Lemma}
\begin{document}

\title{Free sequences and the tightness of pseudoradial spaces}

\author[S. Spadaro]{Santi Spadaro}
\address{Department of Mathematics and Computer Science\\
University of Catania\\
viale A. Doria 6\\
95125 Catania, Itay}
\email{santidspadaro@gmail.com}

\subjclass[2010]{54A25, 54D20, 54D55}
\keywords{Free sequence, tightness, Lindel\"of degree, pseudoradial, radial character}

\maketitle

\begin{abstract}
Let $F(X)$ be the supremum of cardinalities of free sequences in $X$. We prove that the radial character of every Lindel\"of Hausdorff almost radial space $X$ and the set-tightness of every Lindel\"of Hausdorff space are always bounded above by $F(X)$. We then improve a result of Dow, Juh\'asz, Soukup, Szentmikl\'ossy and Weiss by proving that if $X$ is a Lindel\"of Hausdorff space, and $X_\delta$ denotes the $G_\delta$ topology on $X$ then $t(X_\delta) \leq 2^{t(X)}$. Finally, we exploit this to prove that if $X$ is a Lindel\"of Hausdorff pseudoradial space then $F(X_\delta) \leq 2^{F(X)}$.
\end{abstract}

\maketitle 

\section{Introduction}

Free sequences were one of the key tools in Arhangel'skii's celebrated solution of the Alexandroff-Urysohn problem on the cardinality of first-countable compacta. Later, they were discovered to have a definitive impact in various other aspects of the theory of cardinal invariants in topology. Recall that a sequence $\{x_\alpha: \alpha < \kappa \}$ in a topological space $X$ is said to be \emph{free} if $\overline{\{x_\alpha: \alpha < \beta \}} \cap \overline{\{x_\alpha: \beta \leq \alpha < \kappa \}}=\emptyset$, for every $\beta < \kappa$. 

Let $F(X)$ be the supremum of the cardinalities of free sequences in $X$ and let $t(X)$ be the tightness of $X$. Arhangel'skii proved that $F(X)=t(X)$ for every compact Hausdorff space $X$. For Lindel\"of spaces this is not true anymore, as first noted by Okunev \cite{Ok}. Indeed, let $\sigma(2^\kappa)=\{x \in 2^{\kappa}: |x^{-1}(1)| < \aleph_0 \}$ and let $\mathbf{1} \in 2^\kappa$ be the function that is constantly equal to $1$. Then $X=\sigma(2^\kappa) \cup \{\mathbf{1}\}$ with the topology induced from $2^\kappa$ is a space of tightness $\kappa$ where free sequences are countable (the last assertion follows easily from the fact that $\sigma(2^\kappa)$ is a $\sigma$-compact space of countable tightness). However, the inequality $F(X) \leq t(X)$ is still true for every Lindel\"of space $X$. More generally, if $L(X)$ denotes the \emph{Lindel\"of degree} of $X$, then $F(X) \leq L(X) \cdot t(X)$.

There is at least another class of spaces for which we can prove the equality $F(X)=t(X)$. A topological space $X$ is called \emph{pseudoradial} if for every non-closed set $A \subset X$ there is a point $x \in \overline{A} \setminus A$ and a transfinite sequence $\{x_\alpha: \alpha < \kappa \} \subset A$ which converges to $x$. If for every non-closed set $A \subset X$ and every point $x \in \overline{A} \setminus A$ there is a transfinite sequence inside $A$ which converges to $x$, then the space $X$ is called \emph{radial}. In \cite{B}, Bella proved that $t(X) \leq F(X)$ for every pseudoradial regular space $X$ and hence $t(X)=F(X)$ for every Lindel\"of pseudoradial regular space $X$. 

Given a pseudoradial space $X$ the \emph{radial character of $X$}, $\sigma_C(X)$ is defined as the minimum cardinal $\kappa$ such that every non-closed set contains a transfinite sequence of length at most $\kappa$ converging outside. For radial spaces obviously $\sigma_C(X)=t(X)$, but this is true for an even larger class of spaces, that of almost radial spaces. A space $X$ is said to be \emph{almost radial} if, for every non-closed set $A \subset X$ there is a point $x \in \overline{A} \setminus A$ and a sequence $\{x_\alpha: \alpha < \kappa \}$ converging to $x$ such that $x \notin \overline{\{x_\alpha: \alpha < \beta\}}$, for every $\beta < \kappa$ (we will call such a sequence a \emph{thin sequence}). Bella's result implies that $t(X)=\sigma_C(X) \leq F(X)$ for every regular almost radial space $X$.

Since regularity was essential in the proof of his results, Bella \cite{B} asked the following question:

\begin{question}
Let $X$ be a Hausdorff pseudoradial space. Is it true that $t(X) \leq F(X)$? Is it at least true that $\sigma_C(X) \leq F(X)$ for every Hausdorff almost radial space $X$?
\end{question}

The above question is still open. We will prove that $t(X) \leq L(X) \cdot F(X)$ for every Hausdorff pseudoradial space and this will allow us to prove that $t(X)=F(X)$ for every Lindel\"of pseudoradial Hausdorff space, thus removing the regularity assumption from one of Bella's results. Using the notion of \emph{set-tightness} we will be able to extend our results outside of the pseudoradial realm.

In the second part of the paper we will prove a bound for the  tightness of the $G_\delta$-modification of a Lindel\"of Hausdorff space. The $G_\delta$-modification of $X$ (or $G_\delta$ topology) is the topology on $X$ generated by the $G_\delta$-subsets of $X$. Studying which properties of $X$ are preserved by this operation is a natural question that has been extensively studied in the literature. Of course the $G_\delta$-topology is compact if and only if the space is finite, but there are already some interesting preservation results for the Lindel\"of property. For example, Arhangel'skii proved that if $X$ is a Lindel\"of scattered space then $X_\delta$ is Lindel\"of. More generally, one can ask whether the cardinal functions of $X_\delta$ can be bounded in terms of their value on $X$. Sometimes a neat ZFC bound is available. For example, Juh\'asz \cite{JArhan} proved that $c(X_\delta) \leq 2^{c(X)}$ for every compact Hausdorff space $X$, where $c(X)$ denotes the \emph{cellularity} of $X$, Bella and the author \cite{BS} proved that $s(X_\delta) \leq 2^{s(X)}$ for every Hausdorff space $X$, where $s(X)$ is the \emph{spread} of $X$ and Carlson, Porter and Ridderbos \cite{CPR} showed that $L(X_\delta) \leq 2^{F(X) \cdot L(X)}$ for every Hausdorff space $X$ (the $F(X)$ in the exponent cannot be removed in the last result as there is even a compact space $X$ such that $wL(X_\delta) > 2^{\aleph_0}$, see \cite{SSz}). In other cases the existence of a bound may strongly depend on your set theory. For example, the authors of \cite{KMS} proved that if $Nt(X)$ denotes the \emph{Noetherian type} of $X$ then $Nt(X_\delta) \leq 2^{Nt(X)}$ for every compact $X$ if GCH is assumed and $Nt(X)$ has uncountable cofinality and showed, modulo very large cardinals, that both of their assumptions where essential.

The behavior of the tightness of the $G_\delta$ topology changes drastically depending on whether the space is Lindel\"of or not. Dow, Juh\'asz, Soukup, Szentmikl\'ossy and Weiss proved that, consistently, there can be countably tight regular spaces whose $G_\delta$ topology has arbitrarily large tightness, but for every Lindel\"of regular space $X$ we have $t(X_\delta) \leq 2^{t(X)}$. 

Regularity was essential in their argument, but we will show how to remove it, and as a byproduct we will obtain that the natural bound $F(X_\delta) \leq 2^{F(X)}$ holds for a Lindel\"of pseudoradial Hausdorff space $X$.

For undefined notions we refer to \cite{E} and \cite{J}. Background material on elementary submodels, which are used in the proof of Theorem $\ref{tightthm}$, can be found in Dow's survey \cite{DElem}. For general information about pseudoradial spaces we recommend Tironi's survey \cite{TPseudo}.

\section{On the tightness of a pseudoradial space}

\begin{theorem} \label{pseudothm}
Let $X$ be a Hausdorff pseudoradial space. Then $t(X) \leq L(X) \cdot F(X)$.
\end{theorem}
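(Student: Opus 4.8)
The plan is to argue by contraposition and manufacture a free sequence that is too long. Set $\kappa = L(X) \cdot F(X)$ and suppose $t(X) > \kappa$. Then there is a set $A \subseteq X$ and a point $p \in \overline{A}$ such that $p \notin \overline{B}$ for every $B \in [A]^{\leq\kappa}$; we may assume $p \notin A$. Since pseudoradiality is closed-hereditary and a closed subspace of a Lindel\"of space $Y$ satisfies $L(Y) \leq L(X)$, I would first replace $X$ by the closed subspace $\overline{A}$, so that we work inside a Hausdorff pseudoradial space in which $A$ is dense and $L(\overline{A}) \leq L(X)$. The goal is then to build a free sequence $\{x_\alpha : \alpha < \kappa^+\} \subseteq A$, contradicting $F(X) \leq \kappa$.

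The construction proceeds by transfinite recursion. At stage $\alpha$ the set $K_\alpha = \overline{\{x_\beta : \beta < \alpha\}}$ is closed and, having a dense subset of size $\leq \kappa$, satisfies $p \notin K_\alpha$. Using Hausdorffness I would cover $K_\alpha$ by open sets each of which misses some open neighborhood of $p$, and then use the Lindel\"of degree to extract a subcover of size $\leq L(X)$; intersecting the corresponding neighborhoods of $p$ yields an open set $U_\alpha \supseteq K_\alpha$ together with a set $W_\alpha \ni p$ (an intersection of $\leq L(X)$ open sets) with $U_\alpha \cap W_\alpha = \emptyset$. I then choose $x_\alpha \in A \setminus \bigcup_{\beta \leq \alpha} U_\beta$. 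This single requirement already forces the \emph{free sequence} property: for each $\beta$ the final segment $\{x_\gamma : \gamma \geq \beta\}$ lies in the closed set $X \setminus U_\beta$, whose complement $U_\beta$ contains $K_\beta \supseteq \overline{\{x_\gamma : \gamma < \beta\}}$, so the two closures are disjoint.

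The one nontrivial point, and the place where Bella's use of regularity must be replaced, is showing that the choice of $x_\alpha$ is always possible, i.e. that $p \in \overline{A \setminus \bigcup_{\beta < \alpha} U_\beta}$ at every (limit) stage $\alpha \leq \kappa$. In the regular case one arranges $p \notin \overline{U_\beta}$ and the residual set keeps $p$ in its closure; here Hausdorffness and Lindel\"ofness deliver only the weaker $G_\delta$-type separation through $W_\beta$, so a priori $p$ could lie in $\overline{\bigcup_{\beta<\alpha} U_\beta}$. This is the main obstacle, and it is exactly where pseudoradiality enters. If the construction stalls at some $\alpha \leq \kappa$, then since $p \in \overline{A}$ and closure distributes over the two-piece splitting of $A$, we get $p \in \overline{D}$ where $D = \bigcup_{\beta < \alpha}(A \cap U_\beta)$ and $p \notin D$; thus $D$ is non-closed. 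Applying pseudoradiality to $D$ produces a point of $\overline{D} \setminus D$ reached by a convergent transfinite sequence, and I would use the neighborhoods $W_\beta$ (which each point of $A \cap U_\beta$ avoids) to control this sequence.

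The delicate part of the argument is extracting from this convergent sequence a genuine contradiction: either thinning and re-indexing it to produce a free sequence of length $> F(X)$, or locating a subset of $A$ of size $\leq \kappa$ whose closure contains $p$, contradicting the choice of $A$ and $p$. Isolating the correct convergent sequence — ideally one witnessing convergence to $p$ itself, or one whose behaviour against the $W_\beta$'s is rigid enough to be reassembled into a long free sequence — is where the real work lies, and I expect this limit-stage analysis to be the crux of the whole proof. Once it is in place, the recursion cannot halt before $\kappa^+$, and the resulting free sequence of length $\kappa^+ > \kappa \geq F(X)$ contradicts the definition of $F(X)$, yielding $t(X) \leq L(X) \cdot F(X)$.
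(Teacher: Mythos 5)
Your recursive construction is essentially the paper's (it is the content of the Claim in the published proof: if no $\leq\kappa$-sized subfamily of the separating open sets covers the relevant set, one builds a free sequence of length $\kappa^+$ using $L(\overline{\{x_\beta:\beta<\alpha\}})\leq\kappa$ at each stage). But the step you yourself flag as ``where the real work lies'' --- deriving a contradiction when the construction stalls --- is not carried out, and the route you sketch for it does not work. When the recursion halts, $A$ is covered by $\leq\kappa$ many open sets, each disjoint from a neighbourhood of $p$. You propose to apply pseudoradiality to $D=\bigcup_{\beta<\alpha}(A\cap U_\beta)$ and control the resulting convergent sequence with the sets $W_\beta$. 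The problem is that pseudoradiality only gives a sequence converging to \emph{some} point $q\in\overline{D}\setminus D$, with no reason for $q$ to equal $p$; the $W_\beta$ are neighbourhoods of $p$, so they exert no control whatsoever on a sequence converging to $q\neq p$. Since you fixed $p$ in advance as an arbitrary witness of large tightness, pseudoradiality cannot be brought to bear on that particular point, and the contradiction evaporates. This is a genuine gap, not a routine detail.

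The paper resolves exactly this difficulty by reversing the order of quantification: it takes $A$ to be a $\kappa$-closed non-closed set and applies pseudoradiality \emph{first}, letting the convergent sequence $S\subset A$ choose the point $p$ as its own limit. Two things then come for free. First, because $A$ is $\kappa$-closed, $S$ must have (after passing to a cofinal subsequence) regular length $\lambda\geq\kappa^+$. Second, the Hausdorff separation $U_x\cap V_x=\emptyset$ is now taken with respect to this $p$, so once the Claim produces $\leq\kappa$ many sets $U_x$ covering $S$, regularity of $\lambda>\kappa$ forces some single $U_x$ to contain $\lambda$ many terms of $S$, while $V_x$, being a neighbourhood of the limit $p$, contains a final segment of $S$ --- contradicting disjointness. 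No limit-stage analysis of the recursion and no second application of pseudoradiality is needed. If you want to complete your version, you should likewise arrange that the point you separate from is the limit of the convergent sequence supplied by pseudoradiality (which requires starting from a $\kappa$-closed non-closed set rather than an arbitrary pair $(A,p)$), and then replace the limit-stage argument by this cofinality pigeonhole.
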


\begin{proof}
Let $\kappa=L(X) \cdot F(X)$ and suppose by contradiction that $t(X) > \kappa$. Then there is a $\kappa$-closed non-closed subset $A$ of $X$. Since $X$ is pseudoradial and $A$ is not closed we can find a transfinite sequence $S \subset A$  and a point $p \in \overline{A} \setminus A$ such that $S$ converges to $p$. 

For every $x \in A$, let $U_x$ and $V_x$ be disjoint open sets such that $x \in U_x$ and $p \in V_x$. Let $\mathcal{U}=\{U_x: x \in A\}$.

\medskip

\noindent {\bf Claim.} There is a family $\mathcal{W} \in [\mathcal{U}]^{\leq \kappa}$ such that $S \subset \bigcup \mathcal{W}$.

\begin{proof}[Proof of Claim] Assume this is not the case. Then we will construct by recursion a free sequence $F \subset S$ such that $|F| =\kappa^+$.

Suppose that for some $\beta < \kappa^+$ we have picked points $\{x_\alpha: \alpha < \beta \} \subset S$ and open families $\{\mathcal{U}_\alpha: \alpha < \beta \}$ such that:

\begin{enumerate}
\item $\overline{\{x_\alpha: \alpha < \tau \}} \subset \bigcup \mathcal{U}_\tau$, for every $\tau < \beta$.
\item $|\mathcal{U}_\alpha| \leq \kappa$, for every $\alpha < \beta$.
\renewcommand{\qedsymbol}{$\triangle$}
\end{enumerate}

The set $\overline{\{x_\alpha: \alpha < \beta\}}$ is contained in $A$ and has Lindel\"of degree at most $\kappa$, hence we can find a family $\mathcal{U}_\beta \subset \mathcal{U}$ such that $|\mathcal{U}| \leq \kappa$ and $\overline{\{x_\alpha: \alpha < \beta \}} \subset \bigcup \mathcal{U}_\beta$. Note that the family $\bigcup_{\alpha \leq \beta} \mathcal{U}_\alpha$ does not cover $S$ and hence we can choose a point $x_\beta \in S \setminus \bigcup_{\alpha \leq \beta} \mathcal{U}_\alpha$.

Eventually $\{x_\alpha: \alpha < \kappa^+\}$ is a free sequence of cardinality $\kappa^+$.
\renewcommand{\qedsymbol}{$\triangle$}
\end{proof}

Let $\lambda$ be the length of the sequence $S$. We can assume that $\lambda$ is a regular cardinal, and since $A$ is $\kappa$-closed we must have $\lambda \geq \kappa^+$. Therefore there must be $U \in \mathcal{W}$ such that $|S \cap U| =\lambda$. Now $U=U_x$, for some $x \in A$ and $V_x$ contains a final segment of $S$, because it is an open neighbourhood of $p$. But this contradicts the fact that $U_x$ and $V_x$ are disjoint.
\end{proof}

Bella \cite{B} obtained the following corollary with the additional assumption that $X$ is a regular space.

\begin{corollary}
Let $X$ be a Lindel\"of Hausdorff pseudoradial space. Then $t(X) = F(X)$.
\end{corollary}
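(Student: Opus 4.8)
The plan is to combine Theorem \ref{pseudothm} with the general inequality for free sequences recalled in the introduction. Since $X$ is Lindel\"of we have $L(X) = \aleph_0$, so Theorem \ref{pseudothm} immediately yields
\[
t(X) \leq L(X) \cdot F(X) = \aleph_0 \cdot F(X).
\]
Because every infinite space has $F(X) \geq \aleph_0$, the product collapses to $F(X)$, giving $t(X) \leq F(X)$. This direction is where all the real work has been done, and it is precisely the content that required removing the regularity hypothesis present in Bella's original argument.

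For the reverse inequality I would invoke the fact, stated in the introduction, that $F(X) \leq L(X) \cdot t(X)$ holds for \emph{every} topological space. Applying this with $L(X) = \aleph_0$ and using $t(X) \geq \aleph_0$ gives
\[
F(X) \leq \aleph_0 \cdot t(X) = t(X).
\]
Putting the two inequalities together yields $t(X) = F(X)$, as desired.

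Since both constituent inequalities are already available—one from Theorem \ref{pseudothm} and one from the general theory of free sequences—there is no substantial obstacle here; the corollary is essentially a formal consequence. The only point requiring a moment's care is the arithmetic with the $\aleph_0$ factor: one must observe that $\aleph_0 \cdot t(X) = t(X)$ and $\aleph_0 \cdot F(X) = F(X)$, which hold for any infinite Hausdorff space because both cardinal functions are at least $\aleph_0$.
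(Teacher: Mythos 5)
Your proposal is correct and matches the paper's intended argument: the corollary is stated as an immediate consequence of Theorem \ref{pseudothm} together with the inequality $F(X) \leq L(X) \cdot t(X)$ recalled in the introduction, exactly as you combine them. The cardinal arithmetic with the $\aleph_0$ factor is handled as the paper implicitly assumes (cardinal functions being at least $\aleph_0$ by convention), so there is nothing to add.
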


Recalling that $\sigma_C(X)=t(X)$ for every almost radial space, we have the following pair of corollaries.

\begin{corollary}
Let $X$ be a Lindel\"of Hausdorff almost radial space. Then $\sigma_C(X) = F(X)$.
\end{corollary}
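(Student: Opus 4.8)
The plan is to deduce this corollary from the immediately preceding one together with the identity $\sigma_C(X) = t(X)$, valid for every almost radial space and recalled in the introduction. First I would observe that every almost radial space is pseudoradial: given a non-closed set $A$, the almost radial hypothesis supplies a point $x \in \overline{A} \setminus A$ and a thin sequence $\{x_\alpha : \alpha < \kappa\} \subset A$ converging to $x$, and such a sequence is in particular a convergent transfinite sequence in $A$ with limit outside $A$, which is exactly what pseudoradiality asks for.

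Once this is in place, the space $X$ satisfies all the hypotheses of the previous corollary (Lindel\"of, Hausdorff, pseudoradial), so that $t(X) = F(X)$. Invoking $\sigma_C(X) = t(X)$ then gives $\sigma_C(X) = F(X)$, which is the desired conclusion. Thus the entire argument is a formal combination of earlier results, and the real content lies in Theorem \ref{pseudothm} and its first corollary.

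The only step requiring genuine verification is the identity $\sigma_C(X) = t(X)$ for almost radial spaces. For the nontrivial inequality $\sigma_C(X) \leq t(X)$ I would argue as follows: given a non-closed $A$ with witnessing thin sequence $\{x_\alpha : \alpha < \kappa\}$ converging to $x$, pass to a cofinal subsequence of regular length $\lambda$, which still converges to $x$ and remains thin. Since $x$ lies in the closure of this subsequence, tightness provides a subset of size at most $t(X)$ whose closure contains $x$; were $\lambda > t(X)$, that subset would be bounded below some $\beta < \lambda$, forcing $x \in \overline{\{x_\alpha : \alpha < \beta\}}$ and contradicting thinness. Hence $\lambda \leq t(X)$, and $\sigma_C(X) \leq t(X)$ follows, the reverse inequality being standard. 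I anticipate no real obstacle here, since the thinness condition is precisely engineered to make this counting argument work.
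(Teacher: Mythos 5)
Your proposal is correct and follows exactly the paper's route: the paper derives this corollary by "recalling that $\sigma_C(X)=t(X)$ for every almost radial space" and combining it with the preceding corollary $t(X)=F(X)$ for Lindel\"of Hausdorff pseudoradial spaces (almost radial implying pseudoradial). Your extra verification of $\sigma_C(X)\leq t(X)$ via cofinal regular-length subsequences and the thinness condition is sound and simply fills in a step the paper treats as known.
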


\begin{corollary} \label{almostchar}
Let $X$ be a Lindel\"of Hausdorff almost radial space. Then $X$ is sequential if and only if $F(X) \leq \omega$.
\end{corollary}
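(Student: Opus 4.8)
The plan is to read off this corollary from the preceding one, which gives $\sigma_C(X) = F(X)$ for every Lindel\"of Hausdorff almost radial space $X$. Once that equality is in hand, the statement reduces to the purely definitional observation that, for such a space, the condition $\sigma_C(X) \leq \omega$ coincides with sequentiality. Both directions of the desired biconditional will then factor through the chain ``$X$ sequential $\iff \sigma_C(X) \leq \omega \iff F(X) \leq \omega$'', where the second equivalence is exactly the preceding corollary and the first is the content to be verified. Note that the hypothesis already makes $X$ almost radial, hence pseudoradial, so $\sigma_C(X)$ is well defined throughout.

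The heart of the argument is therefore the equivalence between $\sigma_C(X) \leq \omega$ and sequentiality. Recall that $X$ is sequential precisely when every non-closed set $A$ contains an ordinary (length-$\omega$) sequence converging to some point of $\overline{A} \setminus A$, whereas $\sigma_C(X) \leq \omega$ asserts that every non-closed set contains a transfinite sequence of length at most $\omega$ converging to a point outside it. First I would observe that, in a Hausdorff (indeed $T_1$) space, a sequence of finite length cannot converge to a point lying outside the set containing it: the only candidate limit of such a sequence is its last term, which belongs to the set. Consequently the only transfinite sequences of length $\leq \omega$ that can witness $\sigma_C(X) \leq \omega$ are those of length exactly $\omega$, that is, ordinary sequences. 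This identifies the two conditions.

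Putting the pieces together, $X$ is sequential iff $\sigma_C(X) \leq \omega$ iff, by the preceding corollary, $F(X) \leq \omega$, which is the assertion. I do not expect a genuine obstacle here: the one point requiring care is the bookkeeping on sequence lengths in the definition of the radial character, namely checking that ``length at most $\omega$, converging outside'' collapses to ``length exactly $\omega$'', so that $\sigma_C(X) \leq \omega$ really does coincide with the classical definition of a sequential space.
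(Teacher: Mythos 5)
Your proposal is correct and follows exactly the route the paper intends: the corollary is read off from the preceding one ($\sigma_C(X)=F(X)$ for Lindel\"of Hausdorff almost radial spaces) together with the definitional identification of sequentiality with $\sigma_C(X)\leq\omega$, including the correct observation that in a $T_1$ space finite-length sequences cannot converge outside their range. The paper gives no explicit argument, but this is precisely the derivation it relies on.
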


We can do without the pseudoradial assumption if we replace the tightness with a related cardinal invariant known as \emph{set-tightness}. The set-tightness of a topological space $X$ is defined as the minimum cardinal $\kappa$ such that for every non-closed subset $A$ of $X$ and for every point $p \in \overline{A} \setminus A$, there is a $\kappa$-sized family $\{A_\alpha: \alpha < \kappa \}$ of subsets of $A$ such that $p \in \overline{\bigcup \{A_\alpha: \alpha < \kappa \}}$, but $p \notin \bigcup \{\overline{A_\alpha}: \alpha < \kappa \}$. 

The set-tightness was introduced in \cite{AIT}, where it was called \emph{quasi-character}. Obviously $t_s(X) \leq  t(X)$. Arhangel'skii and Bella \cite{AB} proved that $t(X)=t_s(X)$ for every compact Hausdorff space $X$.

Bella \cite{B} proved that $t_s(X) \leq F(X)$ for every regular space $X$. We will prove that $t_s(X) \leq F(X) \cdot L(X)$ for every Hausdorff space $X$, so in particular, $t_s(X) \leq F(X)$ is true for every Lindel\"of Hausdorff space $X$. Note that there are Hausdorff pseudoradial spaces where $t_s(X)<t(X)$ (see \cite{ST}), so the bound $t(X) \leq F(X) \cdot L(X)$ for every pseudoradial Hausdorff space $X$ is not a consequence of $t_s(X) \leq F(X) \cdot L(X)$.

\begin{theorem}
Let $X$ be a Hausdorff space. Then $t_s(X) \leq F(X) \cdot L(X)$.
\end{theorem}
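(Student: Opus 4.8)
The plan is to argue by contradiction. Suppose $t_s(X) > \kappa$, where $\kappa := F(X) \cdot L(X)$, and fix a non-closed set $A$ together with a point $p \in \overline{A} \setminus A$ that admits \emph{no} set-tightness witness of size $\leq \kappa$. From this data I would manufacture a free sequence of length $\kappa^+$, contradicting $F(X) \leq \kappa < \kappa^+$. The basic resource is Hausdorffness: for each $q \in X \setminus \{p\}$ fix disjoint open sets $U_q \ni q$ and $V_q \ni p$, so that $p \notin \overline{U_q}$, and let $\mathcal{U} = \{U_q : q \in X \setminus \{p\}\}$, an open cover of $X \setminus \{p\}$ none of whose members has $p$ in its closure. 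Here $F(X)$ will only serve to cap the length of the sequence, while $L(X)$ does the real work in the covering step.

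The first observation is that no subfamily $\mathcal{W} \in [\mathcal{U}]^{\leq \kappa}$ can cover $A$. Indeed, if $A \subseteq \bigcup \mathcal{W}$, then the traces $\{A \cap U : U \in \mathcal{W}\}$ would be at most $\kappa$ subsets of $A$ whose union is $A$, so that $p \in \overline{A} = \overline{\bigcup\{A \cap U : U \in \mathcal{W}\}}$, while $p \notin \overline{A \cap U} \subseteq \overline{U}$ for each $U \in \mathcal{W}$; this is exactly a forbidden set-tightness witness for $(A,p)$.

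Next I would build points $\{x_\beta : \beta < \kappa^+\} \subseteq A$ by recursion, together with families $\mathcal{U}_\beta \in [\mathcal{U}]^{\leq \kappa}$ and open sets $G_\beta = \bigcup \mathcal{U}_\beta$. At stage $\beta$ one first notes that $p \notin \overline{\{x_\alpha : \alpha < \beta\}}$: otherwise the singletons $\{\{x_\alpha\} : \alpha < \beta\}$, which are closed (as $X$ is $T_1$) and miss $p$ while their union has $p$ in its closure, would form a set-tightness witness of size $\leq |\beta| \leq \kappa$. Hence $C_\beta := \overline{\{x_\alpha : \alpha < \beta\}}$ is a closed subset of $X \setminus \{p\}$, and since it is closed in $X$ we have $L(C_\beta) \leq L(X) \leq \kappa$; thus I can extract from $\mathcal{U}$ a subfamily $\mathcal{U}_\beta$ with $|\mathcal{U}_\beta| \leq \kappa$ and $C_\beta \subseteq G_\beta$. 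As $\bigcup_{\gamma \leq \beta} \mathcal{U}_\gamma$ again lies in $[\mathcal{U}]^{\leq \kappa}$, the first observation yields $A \not\subseteq \bigcup_{\gamma \leq \beta} G_\gamma$, and I pick $x_\beta \in A \setminus \bigcup_{\gamma \leq \beta} G_\gamma$. Freeness is then routine: for each $\beta$ the initial segment has closure $C_\beta \subseteq G_\beta$, whereas every $x_\gamma$ with $\gamma \geq \beta$ was chosen outside $G_\beta$, so the tail lies in the closed set $X \setminus G_\beta$ and hence so does its closure, making the two closures disjoint.

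The step I expect to be the crux is the covering at stage $\beta$. The naive cover $\{U_x : x \in A\}$ of $A$ need not cover $C_\beta$, whose limit points can escape $A$, so it is essential both to spread the Hausdorff separation over all of $X \setminus \{p\}$ and to guarantee $p \notin C_\beta$; the latter is handled by the singleton shortcut, which either terminates the argument with a witness or keeps $p$ out of the closure. Only once $C_\beta$ is a closed subset of $X \setminus \{p\}$ can the Lindel\"of degree be invoked to keep each $G_\beta$, and therefore the cumulative open set obstructing the choice of the next point, of size $\leq \kappa$. This is precisely where the factor $L(X)$ enters, and it is the place where regularity was presumably needed before but can be bypassed.
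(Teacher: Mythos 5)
Your proposal is correct and follows essentially the same route as the paper: the same Hausdorff separation $\{U_q, V_q\}$ over $X \setminus \{p\}$, the same dichotomy between a $\leq\kappa$-sized subcover of $A$ (which yields a forbidden set-tightness witness via the traces $A \cap U$) and a recursively built free sequence of length $\kappa^+$. The only differences are presentational — you run the dichotomy in the opposite order and you make explicit the $T_1$ singleton argument showing $p \notin \overline{\{x_\alpha : \alpha < \beta\}}$, a step the paper leaves implicit.
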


\begin{proof}
Let $\kappa=L(X) \cdot F(X)$ and suppose by contradiction that $t_s(X) > \kappa$. Then there are a set $A \subset X$ and a point $p \in \overline{A} \setminus A$ such that if $\{A_\alpha: \alpha < \kappa\}$ is a $\kappa$-sized family of subsets of $A$ such that $p \notin \bigcup \{\overline{A_\alpha}: \alpha < \kappa \}$ then $p \notin \overline{\bigcup \{A_\alpha: \alpha < \kappa \}}$.

For every $x \in X \setminus \{p\}$, let $U_x$ and $V_x$ be disjoint open sets such that $x \in U_x$ and $p \in V_x$. Let $\mathcal{U}=\{U_x: x \in X \setminus \{p\}\}$.

\medskip

\noindent {\bf Claim.} There is a family $\mathcal{W} \in [\mathcal{U}]^{\leq \kappa}$ such that $A\subset \bigcup \mathcal{W}$.

\begin{proof}[Proof of Claim] Assume this is not the case. Then we will construct a free sequence of cardinality $\kappa^+$ in $X$.

Suppose that for some $\beta < \kappa^+$ we have constructed points $\{x_\alpha: \alpha < \beta \} \subset A$ and open families $\{\mathcal{U}_\alpha: \alpha < \beta \}$ such that:

\begin{enumerate}
\item $\overline{\{x_\alpha: \alpha < \tau \}} \subset \bigcup \mathcal{U}_\tau$, for every $\tau < \beta$.
\item $|\mathcal{U}_\alpha| \leq \kappa$, for every $\alpha < \beta$.
\end{enumerate}

Note that $p \notin \overline{\{x_\alpha: \alpha < \beta \}}$, so $\mathcal{U}$ is a cover of $\overline{\{x_\alpha: \alpha < \beta \}}$. Moreover $L(\overline{\{x_\alpha: \alpha < \beta \}}) \leq \kappa$ and therefore we can find a subcollection $\mathcal{U}_\beta$ of $\mathcal{U}$ having cardinality $\leq \kappa$ such that $\overline{\{x_\alpha: \alpha < \beta \}} \subset \bigcup \mathcal{U}_\beta$. By our assumption $\bigcup \{\bigcup \mathcal{U}_\alpha: \alpha \leq \beta \}$ does not cover $A$ and hence we can pick a point $x_\beta \in A \setminus \bigcup \{\bigcup \mathcal{U}_\alpha: \alpha \leq \beta \}$

Eventually, $\{x_\alpha: \alpha < \kappa^+\}$ is a free sequence in $X$ having cardinality $\kappa^+$, which is a contradiction.
\renewcommand{\qedsymbol}{$\triangle$}
\end{proof}

Let $\{U_\alpha: \alpha < \kappa \}$ be an enumeration of $\mathcal{W}$ and set $A_\alpha=U_\alpha \cap A$, for every $\alpha < \kappa$. Then $\{A_\alpha: \alpha < \kappa \}$ is a $\kappa$-sized family of subsets of $A$ such that $p \notin \bigcup \{ \overline{A_\alpha}: \alpha < \kappa \}$ but $p \in \overline{\bigcup \{A_\alpha: \alpha < \kappa \}}$, and that is a contradiction.
\end{proof}

\begin{corollary}
Let $X$ be a Lindel\"of Hausdorff space. Then $t_s(X)=F(X)$.
\end{corollary}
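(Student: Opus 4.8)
The plan is to prove the two inequalities separately. The inequality $t_s(X) \le F(X)$ is immediate from the previous theorem: since $X$ is Lindel\"of we have $L(X)=\aleph_0$, and because $F(X)\ge\aleph_0$ under the usual convention (indeed every infinite Hausdorff space carries a countable free sequence), the theorem gives $t_s(X)\le F(X)\cdot L(X)=F(X)\cdot\aleph_0=F(X)$. Thus the whole content of the corollary lies in the reverse inequality $F(X)\le t_s(X)$, which I would establish by showing that a Lindel\"of Hausdorff space admits no free sequence of length $t_s(X)^+$.

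Set $\kappa=t_s(X)$ and suppose toward a contradiction that $S=\{x_\alpha:\alpha<\kappa^+\}$ is a free sequence; passing to a subsequence we may assume its length is the regular cardinal $\kappa^+$. First I would locate a good limit point. The tail closures $C_\beta=\overline{\{x_\alpha:\beta\le\alpha<\kappa^+\}}$ form a decreasing family of nonempty closed sets, and every countable subfamily has nonempty intersection, since the intersection of $C_{\beta_0},C_{\beta_1},\dots$ equals $C_{\beta'}$ for $\beta'=\sup_n\beta_n<\kappa^+$, which still contains a tail of $S$. By the Lindel\"of property (applied to the complements) the whole intersection $\bigcap_{\beta<\kappa^+}C_\beta$ is then nonempty, so I may pick $p$ in it. Freeness gives $p\notin\overline{\{x_\alpha:\alpha<\beta\}}$ for every $\beta<\kappa^+$, hence $p\notin S$ while $p\in\overline{S}$. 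Now I would apply the definition of set-tightness to the pair $(S,p)$, obtaining a family $\{A_i:i<\kappa\}$ of subsets of $S$ with $p\in\overline{\bigcup_{i<\kappa}A_i}$ but $p\notin\overline{A_i}$ for every $i<\kappa$.

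The contradiction should come from analysing how each $A_i$ sits inside the well-ordered sequence $S$. If every $A_i$ were bounded, that is, contained in some initial segment $\{x_\alpha:\alpha<\beta_i\}$, then by regularity of $\kappa^+$ the union $\bigcup_{i<\kappa}A_i$ would be contained in a single initial segment $\{x_\alpha:\alpha<\beta^*\}$, forcing $p\in\overline{\{x_\alpha:\alpha<\beta^*\}}$ and contradicting the choice of $p$. The main obstacle is therefore the case in which some $A_i$ is cofinal in $\kappa^+$: such an $A_i$ is again a free sequence, with its own Lindel\"of limit point obtained exactly as above, yet $p\notin\overline{A_i}$, so no contradiction is available at once. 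I expect to resolve this by iterating the construction, peeling off at each stage a cofinal free subsequence together with a fresh limit point, all of these points lying in the remainder $\bigcap_{\beta<\kappa^+}C_\beta$ and being pairwise distinct by the freeness of the successive tails; the contradiction would then be extracted against the Lindel\"of degree. Controlling the length of this iteration, and in particular its behaviour at limit stages where the nested cofinal subsequences may fail to intersect, is the delicate point and is where I would concentrate the effort, possibly replacing the explicit recursion by an elementary submodel reflection argument of the kind used later in the paper.
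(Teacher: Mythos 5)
Your first paragraph is correct and is exactly what the paper intends for that direction: the preceding theorem gives $t_s(X)\le F(X)\cdot L(X)=F(X)$ once $L(X)=\aleph_0$. The issue is the reverse inequality $F(X)\le t_s(X)$, where your proposal is not a proof but an accurate description of where a proof would get stuck. The paper does not prove this direction either: it is implicitly invoking the known inequality $F(X)\le L(X)\cdot t_s(X)$, the set-tightness refinement of the classical $F(X)\le L(X)\cdot t(X)$ (this is the inequality underlying Arhangel'skii and Bella's result \cite{AB} that $t=t_s$ on compacta, since $t=F$ there). So the corollary is meant to be a two-line deduction from the new theorem plus a citation, and the efficient repair of your write-up is to quote that inequality rather than rederive it.

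The gap you flag in your rederivation is genuine and your proposed fix does not close it. After peeling off a cofinal $A_{i}$ and a new limit point, the nested cofinal subsequences $S^0\supseteq S^1\supseteq\cdots$ may have bounded (even empty) intersection at stage $\omega$, so the recursion cannot be continued by intersecting them; nor can one instead intersect the closed sets $T^n=\bigcap_{\beta}\overline{S^n_{\ge\beta}}$, because a decreasing $\omega$-sequence of nonempty closed sets in a Lindel\"of space can have empty intersection (Lindel\"ofness only yields nonempty intersections for families with the countable intersection property, which these need not have). Moreover, even if the recursion survived all finite stages it would produce only countably many pairwise distinct points $p_0,p_1,\dots$, which contradicts nothing about the Lindel\"of degree; to get a contradiction you would need $\kappa^+$ many such points organized into a free or left-separated family, and nothing in your construction controls the closures $\overline{\{p_0,\dots,p_n\}}$ from the other side. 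The elementary submodel reflection you mention faces the same obstruction. In short, the direction $F(X)\le t_s(X)$ is genuinely nontrivial, your sketch does not establish it, and the statement should instead be closed by citing $F(X)\le L(X)\cdot t_s(X)$ from the literature on set-tightness.
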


\section{The $G_\delta$ topology}

In \cite{DJSSW}, Dow, Juh\'asz, Soukup, Szentmikl\'ossy and Weiss proved the following bound for the tightness of the $G_\delta$ topology of a Lindel\"of regular space.

\begin{theorem} \cite{DJSSW}
Let $X$ be a Lindel\"of regular space. Then $t(X_\delta) \leq 2^{t(X)}$.
\end{theorem}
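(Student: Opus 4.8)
The plan is to run an elementary submodel argument, reflecting $p$ and $A$ into a model of size $2^{t(X)}$ that is closed under $\kappa$-sequences, where $\kappa=t(X)$. Fix $A\subseteq X$ and a point $p\in\operatorname{cl}_\delta(A)$ (closure in $X_\delta$); we may assume $p\notin A$. Choose a large regular $\theta$ and $M\prec H(\theta)$ with $X,\tau,A,p\in M$, with $\kappa+1\subseteq M$, $|M|=2^\kappa$ and ${}^\kappa M\subseteq M$. Such an $M$ exists because $(2^\kappa)^\kappa=2^\kappa$, and this identity is exactly where the exponent in the bound $2^\kappa$ (rather than $\kappa$) enters. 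Put $B=A\cap M$, so $|B|\leq 2^\kappa$. The goal is to prove $p\in\operatorname{cl}_\delta(B)$, which immediately yields $t(X_\delta)\leq 2^\kappa$.

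The half of the argument that goes through cleanly is the reflection of $\delta$-closure for $G_\delta$ sets that already live in $M$. Since ${}^\omega M\subseteq M$, a set is a $G_\delta$ coded in $M$ if and only if it equals $\bigcap_n U_n$ for some sequence $\langle U_n\rangle\in M$ of open sets, in which case $\bigcap_n U_n\in M$. As $p\in\operatorname{cl}_\delta(A)$ is a statement with parameters $p,A,\tau\in M$, elementarity gives $M\models p\in\operatorname{cl}_\delta(A)$; hence for every $G_\delta$ set $G\in M$ with $p\in G$ we have $M\models G\cap A\neq\emptyset$, and a witness may be taken inside $M$, so $G\cap B\neq\emptyset$. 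Thus $p$ already lies in the $\delta$-closure of $B$ when tested against the $G_\delta$ neighbourhoods of $p$ that belong to $M$.

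The main obstacle is therefore to pass from $G_\delta$ neighbourhoods in $M$ to arbitrary ones; precisely, I must establish the reflection principle $(\star)$: every $G_\delta$ neighbourhood of $p$ contains a $G_\delta$ neighbourhood of $p$ lying in $M$. Granting $(\star)$, any $G_\delta$ set $G\ni p$ contains some $G^*\in M$ with $p\in G^*$, and the previous paragraph gives $\emptyset\neq G^*\cap B\subseteq G\cap B$, finishing the proof. To attack $(\star)$ I would combine regularity with the Lindel\"of property. Given $G=\bigcap_n U_n\ni p$, regularity lets me shrink each $U_n$ to an open $V_n$ with $p\in V_n\subseteq\overline{V_n}\subseteq U_n$; it then suffices to capture inside $M$ countably many closed sets avoiding $p$ that cover $B$, because their complements are open members of $M$ containing $p$ and ${}^\omega M\subseteq M$ assembles them into the desired $G^*\in M$. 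The natural family to reflect is $\mathcal{W}=\{\,W\in\tau:p\notin\overline{W}\,\}\in M$, which covers $X\setminus\{p\}$ and hence $B$, and for any countable $\{W_k\}\subseteq\mathcal{W}$ the set $\bigcap_k(X\setminus\overline{W_k})$ is automatically a $G_\delta$ neighbourhood of $p$; so the whole problem reduces to extracting from $\mathcal{W}$ a countable subfamily, \emph{lying in $M$}, that covers the trace $B=A\cap M$.

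This last extraction is the genuinely hard step, and it is where tightness must be spent. One cannot simply invoke Lindel\"ofness of $\overline{A}$: precisely because $p\in\operatorname{cl}_\delta(A)$, for every countable $\{W_k\}\subseteq\mathcal{W}$ the $G_\delta$ set $\bigcap_k(X\setminus\overline{W_k})$ meets $A$, so $A\not\subseteq\bigcup_k\overline{W_k}$, and no countable subfamily of $\mathcal{W}$ can cover all of $A$. The covering must therefore be arranged for $A\cap M$ rather than for $A$, with the covering sets kept inside $M$, and my intended device is to reflect $\kappa$-sized witnesses: using $t(X)\leq\kappa$ pick $A_0\in[A]^{\leq\kappa}\cap M$ with $p\in\overline{A_0}$ (so $A_0\subseteq B$ since $\kappa\subseteq M$), run the Lindel\"of reflection of $\mathcal{W}$ against the closed, Lindel\"of set $\overline{A_0}$ internally to $M$, and let the closure of $M$ under $\kappa$-sequences accumulate such approximations so as to exhaust $A\cap M$. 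I expect the bookkeeping of this step---forcing countably many $M$-members of $\mathcal{W}$ to suffice, rather than $\kappa$-many---to be the crux of the whole theorem; it is exactly the place where Dow, Juh\'asz, Soukup, Szentmikl\'ossy and Weiss used regularity, and isolating what regularity really buys here is what should later make the Hausdorff strengthening possible.
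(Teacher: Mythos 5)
Your setup (the $\kappa$-closed elementary submodel $M$ of size $2^\kappa$, and the observation that every $G_\delta$ neighbourhood of $p$ \emph{lying in $M$} meets $B = A\cap M$) matches the paper's, but the second half --- which you yourself flag as the crux and leave unfinished --- is where the proof actually lives, and the route you sketch for it has two concrete problems. First, your stated reduction is to find countably many closed sets in $M$, each missing $p$, that cover $B$; this is impossible, by exactly the observation you make two sentences later: the intersection of their complements would be a $G_\delta$ neighbourhood of $p$ belonging to $M$ and disjoint from $B$, contradicting the first half of your own argument. Relatedly, the global reflection principle $(\star)$ (a $G^*\in M$ with $p\in G^*\subseteq G$) is stronger than what is needed or available. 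What the paper actually produces is a relativized version: an $H\in M$ with $p\in H$ and $H\cap\overline{A\cap M}\subseteq G$, obtained by covering each Lindel\"of closed set $\overline{A\cap M}\setminus U_n$ (which \emph{does} miss $p$) separately and intersecting the resulting countably many $G_\delta$ sets. Such an $H$ still meets $A$, hence meets $A\cap M$ by elementarity, and any point of $H\cap A\cap M$ lies in $G$.

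Second, your device for getting the covering sets into $M$ points in the wrong direction: you apply tightness at $p$, producing a single $A_0\in[A]^{\leq\kappa}\cap M$ with $p\in\overline{A_0}$, and then hope to ``accumulate such approximations so as to exhaust $A\cap M$'' --- but $|A\cap M|$ can be $2^\kappa$, and no countable family in $M$ emerges from this. The paper instead applies tightness at each point $x\in\overline{A\cap M}\setminus\{p\}$: choose $S_x\in[A\cap M]^{\leq\kappa}$ with $x\in\overline{S_x}$; the $\kappa$-closure of $M$ gives $S_x\in M$, hence $\overline{S_x}\in M$; the existence of an open $O\supseteq\overline{S_x}$ and a $G_\delta$ set $G'\ni p$ with $O\cap G'=\emptyset$ (this is where the separation axiom and the Lindel\"ofness of $\overline{S_x}$ are spent) is a statement whose parameters lie in $M$, so elementarity yields $O_x,G_x\in M$ with $x\in\overline{S_x}\subseteq O_x$, $p\in G_x$ and $O_x\cap G_x=\emptyset$. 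Now $\{O_x\}$ covers $\overline{A\cap M}\setminus U_n$ by sets \emph{in $M$}, a countable subcover exists by Lindel\"ofness, and the corresponding $G_x$'s intersect to the required $H_n\in M$. This is the missing idea; without it the proposal does not close.
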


In the next theorem we will show how to relax the regularity assumption in their bound.

\begin{theorem} \label{tightthm}
Let $(X, \tau)$ be a Lindel\"of Hausdorff space. Then $t(X_\delta) \leq 2^{t(X)}$.
\end{theorem}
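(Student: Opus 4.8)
The plan is to bound the tightness of $X_\delta$ by a cardinality argument combined with elementary submodels, following the general strategy of closing off under a countable-to-$t(X)$-sized collection of witnesses and then reflecting. Set $\kappa = t(X)$ and suppose $A \subset X$ together with a point $p \in \overline{A}^{X_\delta} \setminus A$, where the closure is taken in the $G_\delta$ topology. What I need to produce is a subset $B \subset A$ with $|B| \leq 2^\kappa$ and $p \in \overline{B}^{X_\delta}$. The key observation is that $p$ being in the $G_\delta$-closure of $A$ means that every $G_\delta$ set containing $p$ meets $A$; equivalently, for every countable family $\{W_n : n < \omega\}$ of $\tau$-open sets with $p \in \bigcap_n W_n$, we have $A \cap \bigcap_n W_n \neq \emptyset$. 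So the combinatorial task is to find a small $B$ that meets every such countable intersection of neighbourhoods of $p$.

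First I would fix a suitably large regular cardinal $\theta$ and an elementary submodel $M \prec H(\theta)$ with $X, \tau, A, p \in M$, $\kappa \subset M$ (so $M$ contains all ordinals up to $t(X)$), and $|M| = 2^\kappa$, chosen so that $M$ is closed under $\kappa$-sequences (i.e. $M^\kappa \subset M$), or at least under countable sequences; since $|M|^\kappa = (2^\kappa)^\kappa = 2^\kappa$ this closure requirement is compatible with the cardinality bound $|M| = 2^\kappa$ that I want. I would then let $B = A \cap M$, so $|B| \leq 2^\kappa$, and aim to show $p \in \overline{B}^{X_\delta}$. The strategy is reflection: I want to argue that any countable family of $\tau$-neighbourhoods of $p$ that separates $p$ from $B$ can be pulled down into $M$, and then use elementarity together with $p \in \overline{A}^{X_\delta}$ to derive a contradiction.

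The main obstacle, and where the $t(X) = \kappa$ hypothesis and the Lindel\"of property of $X$ must enter, is controlling the $\tau$-open sets witnessing membership in the $G_\delta$ topology: a $G_\delta$ neighbourhood of $p$ is an intersection $\bigcap_n W_n$ of $\tau$-open sets, and these $W_n$ need not lie in $M$ even though $p$ does. To handle this I would use that $X$ is Lindel\"of and Hausdorff to replace each arbitrary $\tau$-open $W_n \ni p$ by a member of a fixed base-like family definable from $p$ inside $M$; more precisely, for each point $x \neq p$ I would fix disjoint open sets separating $x$ and $p$ (exactly as in the proofs of the earlier theorems in this excerpt), so that a canonical family of neighbourhoods of $p$ becomes available, and I would use the tightness bound $t(X) \leq \kappa$ to reduce the relevant closures to sets of size $\leq \kappa$, all of which can be captured by $M$. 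The delicate point is that the $G_\delta$ topology is generated by countable intersections, so I must ensure that the witnessing countable families themselves reflect into $M$; this is precisely why countable closure of $M$ is needed. Once a putative separating $G_\delta$ set is shown to lie in $M$, elementarity gives that $A \cap M$ already meets it whenever $A$ does, contradicting the assumption that it separates $p$ from $B = A \cap M$.

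Assembling these pieces, I expect the argument to run as follows: assume for contradiction that $p \notin \overline{B}^{X_\delta}$, fix a countable family of $\tau$-open sets whose intersection is a $G_\delta$ neighbourhood of $p$ missing $B$, use Lindel\"of-ness and tightness inside $M$ to arrange that this family (or a sufficient surrogate built from the canonical separating neighbourhoods) belongs to $M$, and then apply elementarity to transfer the separation statement to all of $A$, contradicting $p \in \overline{A}^{X_\delta}$. The only genuinely technical step will be the reflection of the countable witnessing family into $M$, and I anticipate that the $\omega$-closure of $M$ together with the Hausdorff separation of $p$ from every other point will make this go through.
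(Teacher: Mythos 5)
Your high-level architecture coincides with the paper's: take a $\kappa$-closed $M \prec H(\theta)$ of size $2^\kappa$ containing $X,\tau,A,p$, set $B = A\cap M$, and show $p \in \mathrm{Cl}_\delta(B)$ by getting a separating $G_\delta$ set to reflect into $M$. You also correctly locate the obstacle (the open sets $U_n$ with $G=\bigcap_n U_n \ni p$ need not lie in $M$) and correctly guess the ingredients (Hausdorff separation of $p$ from each other point, tightness to produce $\kappa$-sized witnesses captured by $M$, Lindel\"ofness for countable subcovers). But the proposal stops exactly where the proof begins: the construction of the surrogate $G_\delta$ set in $M$ is announced ("a sufficient surrogate built from the canonical separating neighbourhoods\dots I anticipate that the $\omega$-closure of $M$\dots will make this go through") rather than carried out, and the particular form you suggest for it is the wrong one. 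You propose to replace each $W_n \ni p$ by "a member of a fixed base-like family definable from $p$ inside $M$"; in a Hausdorff non-regular space there is no reason for $p$ to have any controlled neighbourhood base (even of $G_\delta$ sets) inside $M$, and no such global replacement is possible. What actually works is a \emph{relative} approximation: one only needs a $G_\delta$ set $H\in M$ with $p\in H$ and $H\cap \overline{A\cap M} \subset G$, and that weaker demand is what the Lindel\"ofness of $\overline{A\cap M}$ buys.

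Concretely, the missing construction runs in two stages. First, for each $x\in\overline{A\cap M}\setminus\{p\}$, tightness gives $S_x\in[A\cap M]^{\leq\kappa}$ with $x\in\overline{S_x}$; $\kappa$-closure puts $S_x$, hence $\overline{S_x}$, into $M$; covering the Lindel\"of set $\overline{S_x}$ by countably many of the sets $V_y$ (from the Hausdorff separation) and intersecting the corresponding $W_y$ shows that $H(\theta)$ satisfies "there exist an open $O\supset\overline{S_x}$ and a $G_\delta$ set $G'\ni p$ with $O\cap G'=\emptyset$"; since all parameters lie in $M$, elementarity yields such $O_x, G_x\in M$. This elementarity step is what converts your externally chosen separating sets into sets of $M$, and it is absent from your sketch. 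Second, for each $n$ one covers the Lindel\"of set $\overline{A\cap M}\setminus U_n$ by countably many $O_x$ and puts $H_n=\bigcap\{G_x: x\in C_n\}\in M$ (countable closure of $M$ again), so that $p\in H_n$ and $H_n\cap\overline{A\cap M}\subset U_n$; then $H=\bigcap_n H_n\in M$ is a $G_\delta$ neighbourhood of $p$ with $H\cap\overline{A\cap M}\subset G$, and elementarity applied to "$H\cap A\neq\emptyset$" finishes the argument. Without this double covering argument and the intermediate elementarity step, the reflection you invoke at the end has no $G_\delta$ set in $M$ to act on, so as written the proposal has a genuine gap at its central step.
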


\begin{proof}
Let $A \subset X_\delta$ be a non-closed set and $p$ be a point in the $G_\delta$-closure of $A$. Let $\kappa=t(X)$, let $\theta$ be a large enough regular cardinal and let $M$ be a $\kappa$-closed elementary submodel of $H(\theta)$ such that $X, \tau, A, p \in M$ and $|M|=2^\kappa$. Note that if we call $\tau_\delta$ the collection of all $G_\delta$ subsets of $X$ we also have $\tau_\delta \in M$. We claim that $p \in Cl_\delta(A \cap M)$.

Indeed, let $G$ be a $G_\delta$ set such that $p \in G$ and let $\{U_n: n < \omega\}$ be a sequence of open sets such that $G = \bigcap \{U_n: n < \omega \}$.

For every point $x \in \overline{A \cap M}$, such that $x \neq p$, let $V_x$ and $W_x$ be disjoint open sets such that $x \in V_x$ and $p \in W_x$.  Since $t(X) \leq \kappa$, we can find, for every $x \in \overline{A \cap M}$ a $\leq \kappa$-sized subset $S_x \subset A \cap M$ such that $x \in \overline{S_x}$. Since $M$ is $\kappa$-closed, we have $S_x \in M$ and hence $\overline{S_x} \in M$. Now $\overline{S_x}$ is Lindel\"of and $\{V_x: x \in \overline{A \cap M} \setminus \{p\} \}$ is an open cover of $\overline{S_x}$. Therefore we can find a countable subset $C_x \subset \overline{A \cap M} \setminus \{p\}$ such that $\overline{S_x} \subset \bigcup \{V_x : x \in C_x \}$ and $p \in \bigcap \{W_x: x \in C_x \}$. Define $O'_x=\bigcup \{V_x: x \in C_x \}$ and $G'_x=\bigcap \{W_x: x \in C_x \}$. Note that $O'_x$ is an open set, $G'_x$ is a $G_\delta$ set and $O'_x \cap G'_x=\emptyset$. So we just showed that:

$$H(\theta) \models (\exists O) (\exists G)(O \in \tau \wedge G \in \tau_\delta \wedge \overline{S_x} \subset O \wedge p \in G \wedge O \cap G=\emptyset)$$

Note that all free variables in the above formula (namely $\overline{S_x}, \tau$ and $\tau_\delta$) are in $M$ and hence by elementarity we also have:

$$M \models (\exists O) (\exists G)(O \in \tau \wedge G \in \tau_\delta \wedge \overline{S_x} \subset O \wedge p \in G \wedge O \cap G=\emptyset)$$

What this means is that we can find, for every $x \in \overline{A \cap M} \setminus \{p\}$ an open set $O_x \in M$ and a $G_\delta$ set $G_x \in M$ such that $\overline{S_x} \subset O_x$, $p \in G_x$ and $O_x \cap G_x=\emptyset$.

Note now that $\{O_x: x \in \overline{A \cap M} \setminus \{p\} \}$ is an open cover of the Lindel\"of space $\overline{A \cap M} \setminus U_n$, for every $n < \omega$. Therefore we can find a countable subset $C_n$ of $\overline{A \cap M} \setminus U_n$ such that $\overline{A \cap M} \setminus U_n \subset \bigcup \{O_x: x \in C_n \}$. Note that $H_n=\bigcap \{G_x: x \in C_n\}$ is a $G_\delta$ set and $H_n \cap \overline{A \cap M} \setminus U_n=\emptyset$. Let now $H=\bigcap \{H_n: n < \omega \} \in M$. Note that $H$ is a $G_\delta$ set containing $p$ and therefore $H \cap A \neq \emptyset$. Since $H, A \in M$, by elementarity we also have $H \cap A \cap M \neq \emptyset$. But $H \cap \overline{A \cap M} \setminus G=\emptyset$, that is $H \cap \overline{A \cap M} \subset G$. It follows that $G \cap A \cap M \neq \emptyset$, as we wanted.

\end{proof}

\begin{lemma} \label{lindgdelta}
(Carlson, Porter and Ridderbos \cite{CPR}) Let $X$ be a Lindel\"of Hausdorff space. Then $L(X_\delta) \leq 2^{F(X)}$,
\end{lemma}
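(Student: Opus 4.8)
The plan is to read off the lemma as the Lindel\"of instance of the general Carlson--Porter--Ridderbos inequality $L(X_\delta) \leq 2^{F(X) \cdot L(X)}$ from \cite{CPR}, valid for every Hausdorff space and recalled in the introduction, which I treat as a black box. For a Lindel\"of space $L(X) = \omega$, and since these cardinal invariants are by convention at least $\omega$ we have $F(X) \geq \omega$; hence $F(X) \cdot L(X) = F(X) \cdot \omega = F(X)$. Substituting into the general bound gives $L(X_\delta) \leq 2^{F(X) \cdot L(X)} = 2^{F(X)}$, which is exactly the assertion. The only point deserving a remark is the inequality $F(X) \geq \omega$: it is the standard convention that such invariants are at least $\omega$, and it is precisely what makes $F(X) \cdot \omega$ collapse to $F(X)$ rather than to $\omega$, so it cannot be dispensed with.

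If instead one prefers a self-contained derivation that does not invoke the full strength of \cite{CPR}, I would run the elementary-submodel closing-off scheme already used in Theorem \ref{tightthm}. Given a cover $\mathcal{G}$ of $X$ by $G_\delta$ sets, take a suitably closed elementary submodel $M \prec H(\theta)$ with $|M| = 2^{F(X)}$ and $X, \tau, \tau_\delta, \mathcal{G} \in M$, and aim to show that the subfamily $\mathcal{G} \cap M$, whose size is at most $2^{F(X)}$, already covers $X$. Were this to fail, a point $p \in X \setminus \bigcup(\mathcal{G} \cap M)$ would let us construct, by transfinite recursion, a free sequence in $X$ of length $F(X)^+$, in the exact spirit of the Claims established inside Theorem \ref{pseudothm} and the set-tightness theorem: at each stage one separates the closure of the sequence built so far from $p$ using Hausdorffness and the Lindel\"of property of that closure, transports the separating neighbourhoods into $M$ by elementarity, and then uses the covering failure to select the next point outside everything recorded so far.

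The main obstacle, in the self-contained route, lies entirely in keeping this recursion alive: one must ensure that at each of the $F(X)^+$ stages the $G_\delta$-neighbourhoods of $p$ produced inside $M$ still omit some point of $X$, so that the construction reaches length $F(X)^+$ and contradicts the definition of $F(X)$. In the specialization route there is essentially no obstacle at all: the whole argument collapses to the arithmetic identity $F(X) \cdot \omega = F(X)$, with all the genuine work safely absorbed into the cited theorem.
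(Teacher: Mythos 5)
Your primary route is exactly what the paper does: Lemma \ref{lindgdelta} is stated with no proof beyond the citation to \cite{CPR}, and it is indeed just the Lindel\"of specialization of the general bound $L(X_\delta) \leq 2^{F(X)\cdot L(X)}$ recalled in the introduction, with the arithmetic $F(X)\cdot\omega = F(X)$ handled correctly via the convention $F(X)\geq\omega$. Your proposal is correct and takes essentially the same approach as the paper; the self-contained sketch you append is an optional extra and need not be evaluated here.
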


\begin{corollary}
Let $X$ be a Lindel\"of Hausdorff pseudoradial space. Then $F(X_\delta) \leq 2^{F(X)}$.
\end{corollary}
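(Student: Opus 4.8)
The plan is to assemble this from results already in hand, since all the heavy lifting has been done in Theorem~\ref{tightthm} and Lemma~\ref{lindgdelta}. The starting point is the general inequality $F(Y) \le L(Y) \cdot t(Y)$, valid for every topological space $Y$ and recalled in the introduction. Applying it to $Y = X_\delta$ reduces the problem to bounding each factor separately:
\[
F(X_\delta) \le L(X_\delta) \cdot t(X_\delta).
\]
It therefore suffices to show that both $L(X_\delta)$ and $t(X_\delta)$ are at most $2^{F(X)}$.

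For the Lindel\"of degree, I would invoke Lemma~\ref{lindgdelta} of Carlson, Porter and Ridderbos directly: since $X$ is Lindel\"of and Hausdorff, it gives $L(X_\delta) \le 2^{F(X)}$ with no further hypotheses needed. For the tightness, I would use Theorem~\ref{tightthm}, which yields $t(X_\delta) \le 2^{t(X)}$ for any Lindel\"of Hausdorff space. This is exactly the point where the pseudoradial assumption is used: by the corollary to Theorem~\ref{pseudothm} asserting that $t(X) = F(X)$ for every Lindel\"of Hausdorff pseudoradial space, we may rewrite the exponent and obtain $t(X_\delta) \le 2^{t(X)} = 2^{F(X)}$. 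Combining the two estimates gives
\[
F(X_\delta) \le L(X_\delta) \cdot t(X_\delta) \le 2^{F(X)} \cdot 2^{F(X)} = 2^{F(X)},
\]
which is the desired bound.

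I do not expect a genuine obstacle here: the corollary is a short bookkeeping assembly, and the only substantive input beyond the cited theorems is the equality $t(X) = F(X)$. That equality is precisely the leverage that makes the statement go through in the pseudoradial setting, since it lets us trade the tightness-based bound $2^{t(X)}$ coming from Theorem~\ref{tightthm} for the $F$-based bound we want. Without it one would merely conclude $F(X_\delta) \le 2^{t(X)}$, and the $\sigma$-product example from the introduction shows that $t(X)$ and $F(X)$ can differ in the Lindel\"of realm, so the gain is real rather than cosmetic.
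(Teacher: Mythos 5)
Your proposal is correct and follows exactly the paper's own argument: decompose via $F(X_\delta) \leq L(X_\delta) \cdot t(X_\delta)$, bound the Lindel\"of degree by Lemma~\ref{lindgdelta}, and bound the tightness by Theorem~\ref{tightthm} combined with the equality $t(X)=F(X)$ from Theorem~\ref{pseudothm}. No differences worth noting.
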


\begin{proof}
Combining Theorem $\ref{pseudothm}$ with Theorem $\ref{tightthm}$ we obtain $t(X_\delta) \leq 2^{t(X)} = 2^{F(X)}$. Recalling that $F(X_\delta) \leq L(X_\delta) \cdot t(X_\delta)$ and using Lemma $\ref{lindgdelta}$ we obtain $F(X_\delta) \leq 2^{F(X)}$, as desired.
\end{proof}

The above corollary was proved in \cite{BS2} with the additional assumption that $X$ is a regular space.

\section{Acknowledgements}

The author is grateful to the referees for their careful reading of the paper and to INdAM-GNSAGA for partial financial support.


\begin{thebibliography}{10}

\bibitem{AB} A.V. Arhangel'skii and A. Bella, \emph{Cardinal invariants in remainders and variations of tightness}, Proc. Amer. Math, Soc. \textbf{119} (1993), 947--954.

\bibitem{AIT} A.V. Arhangel'skii, R. Isler and G. Tironi, \emph{On pseudoradial spaces}, Comment. Math. Univ. Carolinae \textbf{27} (1986), 137--154.

\bibitem{B} A. Bella, \emph{Free sequences in pseudoradial spaces}, Comment. Math. Univ. Carolinae \textbf{27} (1986), 163--170.

\bibitem {BS}  A. Bella and  S. Spadaro,  \emph{Cardinal invariants for the $G_\delta$-topology},  Colloquium Math., \textbf{156} (2019),  123--133. 

\bibitem{BS2} A. Bella and S. Spadaro, \emph{Upper bounds for the tightness of the $G_\delta$ topology}, preprint.

\bibitem {CPR} N. A. Carlson, J. R. Porter and  G. J. Ridderbos,  \emph{On cardinality bounds for homogeneous spaces and the $G_\kappa$-modification of a space},  Topology Appl. \textbf{159} (2012), 2932--2941.   

\bibitem{DElem} A. Dow, \emph{An introduction to applications of elementary submodels to topology}, Topology Proceedings \textbf{13} (1988), 17--72. 

\bibitem {DJSSW}  A. Dow, I. Juh\'asz, L. Soukup, Z. Szentmikl\'ossy and W. Weiss, \emph{On the tightness of $G_\delta$-modifications}, Acta Math. Hungarica \textbf{158} (2019), 294--301.

\bibitem{E} R. Engelking, \emph{General Topology}, PWN, Warsaw, 1977.

\bibitem{JArhan} I. Juh\'asz, \emph{On two problems of A.V.\ Archangel'skii}, General Topology and its Applications \textbf{2} (1972) 151-156. 

\bibitem{J} I. Juh\'{a}sz, \emph{Cardinal Functions in Topology - Ten Years Later}, Math. Centre Tracts \textbf{123}, Amsterdam, 1980.

\bibitem{KMS} M. Kojman, D. Milovich and S. Spadaro, \emph{Noetherian type in topological products}, Israel Journal of Mathematics \textbf{202} (2014), 195--225.

\bibitem{Ok} O. Okunev, \emph{A $\sigma$-compact space without uncountable free sequences can have arbitrary tightness}, Questions Answers Gen. Topology \textbf{23} (2005), 107--108.

\bibitem{ST} P. Simon and G. Tironi, \emph{Two examples of pseudoradial spaces}, Comment. Math. Univ. Carolinae \textbf{27} (1986), 155-161.

\bibitem{SSz} S. Spadaro and P. Szeptycki, \emph{$G_\delta$-covers of compact spaces}, Acta Math. Hungarica \textbf{154} (2018), 252--263.

\bibitem{TPseudo} G. Tironi, \emph{Pseudoradial spaces: results and problems}, Mediterr. J. Math. \textbf{3} (2006), no. 2, 313–325. 


\end{thebibliography}
\end{document}